 \newtheorem{thm}{Theorem}[section]
 \newtheorem{cor}[thm]{Corollary}
 \newtheorem{lem}[thm]{Lemma}
 \newtheorem{prop}[thm]{Proposition}
 \theoremstyle{definition}
 \theoremstyle{remark}
 \newtheorem{rem}[thm]{Remark}
 \numberwithin{equation}{section}
\newcommand{\A}{\mathcal{A}}
\newcommand{\B}{\mathcal{B}}
\newcommand{\M}{\mathcal{M}}
\newcommand{\X}{\mathcal{X}}
\newcommand{\N}{\mathcal{N}}
\newcommand{\PH}{\phi:\mathcal{A}\times \mathcal{A}\rightarrow \mathcal{X}}
\begin{document}

\title[On derivations and Jordan derivations...]
 {On derivations and Jordan derivations through zero products}

\author{ Hoger Ghahramani}

\thanks{{\scriptsize
\hskip -0.4 true cm \emph{MSC(2010)}: 15A86; 47A07; 47B47; 47B49.
\newline \emph{Keywords}: Bilinear maps, Derivation, Jordan derivation, Zero (Jordan) product determined algebra. \\}}

\address{Department of
Mathematics, University of Kurdistan, P. O. Box 416, Sanandaj,
Iran.}

\email{h.ghahramani@uok.ac.ir; hoger.ghahramani@yahoo.com}

\thanks{}

\thanks{}

\subjclass{}

\keywords{}

\date{}

\dedicatory{}

\commby{}


\begin{abstract}
Let $\A$ be a unital complex (Banach) algebra and $\M$ be a unital
(Banach) $\A$-bimodule. The main results describe (continuous)
derivations or Jordan derivations $D:\A\rightarrow \M$ through
the action on zero products, under certain conditions on $\A$ and
$\M$. The proof is based on the consideration of a (continuous)
bilinear map satisfying a related condition.
\end{abstract}

\maketitle
\section{Introduction}
Throughout this paper all algebras and vector spaces will be over
the complex field $\mathbb{C}$ and all algebras are associative
with unity, unless indicated otherwise. All modules are unital.
Let $\A$ be an algebra and $\M$ be an $\A$-bimodule. Recall that
a linear map $D:\A \rightarrow \M$ is said to be a \emph{Jordan
derivation} (or \emph{generalized Jordan derivation}) if
$D(a\circ b)=D(a)\bullet b+a \bullet D(b)$ (or $D(a\circ b
)=D(a)\bullet b+a\bullet D(b)-aD(1)b-bD(1)a$) for all $a,b\in
\mathcal{A}$.
\\
Here and subsequently, $'\circ'$ denotes the Jordan product
$a\circ b=ab+ba$ on $\A$ and $'\bullet'$ denotes the Jordan
product on $\M$:
\[ a\bullet m=m\bullet a=am+ma, \quad a\in \A, \, m\in \M.\]
$D$ is called a \emph{derivation} (or \emph{generalized
derivation}) if $D(ab)=D(a)b+aD(b)$ (or
$D(ab)=D(a)b+aD(b)-aD(1)b$) for all $a,b\in \mathcal{A}$. Clearly,
each (generalized) derivation is a (generalized) Jordan
derivation. The converse is, in general, not true.
\par
The question of characterizing derivations or Jordan derivations
on algebras through the action on zero products has attracted the
attention of many authors over the last few years. We refer the
reader to \cite{Ala2, Gha} for a full account of the topic and a
list of references.
\par
In this paper, we consider the subsequent conditions on a linear
map $D$ from an algebra $\mathcal{A}$ into an
$\mathcal{A}$-bimodule $\mathcal{M}$:
\begin{enumerate}
\item[(d1)] $ab=0 \Rightarrow aD(b)+D(a)b=0.$
\item[(d2)] $ab=ba=0 \Rightarrow aD(b)+D(a)b=0.$
\item[(d3)] $a\circ b=0 \Rightarrow a\bullet D(b)+D(a)\bullet b=0.$
\item[(d4)] $ab=ba=0 \Rightarrow a\bullet D(b)+D(a)\bullet b=0.$
\end{enumerate}
Our purpose is to investigate whether these conditions
characterizes derivations or Jordan derivations.
\par
The above questions and the question of characterizing linear
maps that preserve zero products, Jordan product, etc. on algebras
can be sometimes effectively solved by considering bilinear maps
that preserve certain zero product properties (for instance, see
\cite{Ala1, Ala2, Ala3, Bre2, Chu}). Motivated by these reasons
Bre$\check{\textrm{s}}$ar et al. \cite{Bre} introduced the
concept of zero product (Jordan product) determined algebras,
which can be used to study the linear maps preserving zero
product (Jordan product) and derivable (Jordan derivable) maps at
zero point.
\par
In this context one is usually involved with the following
condition on a bilinear map $\PH$, where $\X$ is an arbitrary
linear space:
\[ a,b\in \A, \quad ab=ba=0 \Rightarrow \phi(a,b)=0. \quad \quad \qquad (G)\]
A way to unify and generalize both of the concepts of zero
product determined and zero Jordan product determined consists in
considering bilinear maps satisfying $(G)$.
\par
The paper is organized as follows. In section 2 we introduce the
notation and terminology, and then a class of (Banach)
$\A$-bimodules satisfying a condition $\mathbb{M}$
($\mathbb{M}^{\prime}$). Also we give several classes of bimodules
which satisfy this condition. Section 3 is concerned with bilinear
maps. We will consider the condition $(G)$ for bilinear maps in
this section. Also we present some results concerning the notions
of zero (Jordan) product determined algebras. In section 4 we
study the linear maps satisfying (d1)--(d4) for modules with
property $\mathbb{M}$ ($\mathbb{M}^{\prime}$), by using the
results of section 3.
\section{Preliminaries}
In this section we introduce the notation and terminology, and
then a special class of (Banach) bimodules.
\par
Let $\A$ be an algebra, then $\Im(\A)$ denotes the set of all
linear combinations of idempotents in $\A$. Let $\M$ be an
$\A$-bimodule. We say that $\M$ satisfies $\mathbb{M}$, if there
is an ideal $\mathcal{J}$ in $\A$ such that $\mathcal{J}\subseteq
\Im(\A)$ and
\begin{equation}\label{e}
\{m\in \M \, | \, xmx=0 \, \, for \, all \,\, x\in
\mathcal{J}\}=\{0\}.
\end{equation}
If $\A$ is a Banach algebra, $\M$ is a Banach $\A$-bimodule and
there is an ideal $\mathcal{J}$ in $\A$ such that
$\mathcal{J}\subseteq \overline{\Im(\A)}$ and \eqref{e} holds,
then we say that $\M$ satisfies $\mathbb{M}^{\prime}$.
\par
Note that if (Banach) $\A$-bimodule $\M$ satisfies $\mathbb{M}$
($\mathbb{M}^{\prime}$), then we have
\[ \{ m\in \M \, | \, xm=mx=0  \, \, for \, all \,\, x\in
\mathcal{J}\}=\{0\} . \] Now we introduce the class of (Banach)
bimodules with the property $\mathbb{M}$ ($\mathbb{M}^{\prime}$).
\begin{prop}\label{idem}
Let $\A$ be an (Banach) algebra with $\A=\Im(\A)$
($\A=\overline{\Im(\A)}$). Then every (Banach) $\A$-bimodule $\M$
satisfies $\mathbb{M}$ ($\mathbb{M}^{\prime}$)
\end{prop}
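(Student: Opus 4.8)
The plan is to use the ideal $\mathcal{J}=\A$ itself as the witnessing ideal in the definition of $\mathbb{M}$ (resp. $\mathbb{M}^{\prime}$). First I would observe that $\A$ is trivially an ideal in $\A$, and that the hypothesis $\A=\Im(\A)$ (resp. $\A=\overline{\Im(\A)}$) gives at once the inclusion $\mathcal{J}=\A\subseteq \Im(\A)$ (resp. $\mathcal{J}=\A\subseteq\overline{\Im(\A)}$). Thus the first requirement imposed on $\mathcal{J}$ holds by fiat, and the only thing left to check is condition \eqref{e} for this choice of $\mathcal{J}$.

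Next I would verify \eqref{e}. The key point is that $\M$ is a \emph{unital} $\A$-bimodule, so the unit $1$ of $\A$ satisfies $1\cdot m\cdot 1=m$ for every $m\in\M$. Since $1$ is an idempotent ($1^2=1$), it belongs to $\Im(\A)$, hence to $\A=\mathcal{J}$. Consequently, if $m\in\M$ is such that $xmx=0$ for all $x\in\mathcal{J}$, then in particular the choice $x=1$ yields $m=1\,m\,1=0$. This shows that the set appearing in \eqref{e} equals $\{0\}$, which is exactly what is needed for $\M$ to satisfy $\mathbb{M}$ (resp. $\mathbb{M}^{\prime}$).

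I do not anticipate any genuine obstacle. The whole force of the hypothesis is that $\A=\Im(\A)$ (resp. $\A=\overline{\Im(\A)}$) is precisely what permits the choice $\mathcal{J}=\A$, thereby placing the unit inside the ideal $\mathcal{J}$; the unitality of the module then does the rest. The Banach case $\mathbb{M}^{\prime}$ runs word for word as the algebraic case, with $\overline{\Im(\A)}$ in place of $\Im(\A)$, since no approximation or continuity argument is actually invoked — the single element $x=1$ already forces $m=0$.
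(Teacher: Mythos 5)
Your proposal is correct and follows essentially the same route as the paper: take $\mathcal{J}=\A$ itself as the witnessing ideal (so the inclusion $\mathcal{J}\subseteq \Im(\A)$, resp. $\mathcal{J}\subseteq\overline{\Im(\A)}$, is the hypothesis verbatim) and then use unitality, $m=1\,m\,1=0$, to verify condition \eqref{e}. The paper's proof is exactly this argument, stated more tersely.
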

\begin{proof}
Let $m\in \M$ and $ama=0$ for all $a\in \A$. Since $\A$ is
unital, it follows that $m=0$. Now if we consider $\A$ as an
ideal, then by hypothesis any (Banach) $\A$-bimodule $\M$
satisfies $\mathbb{M}$ ($\mathbb{M}^{\prime}$).
\end{proof}
\par
If $\A$ is a $W^{*}$-algebra, then the linear span of projections
is norm dense in $\A$, so $\A=\overline{\Im(\A)}$.
\par
Let $\mathcal{H}$ be a Hilbert space and $B(\mathcal{H})$ denotes
the algebra of all bounded linear operators on $\mathcal{H}$. Then
from \cite[Lemma 3.2]{Hou} and \cite[Theorem 1]{Pea}, we have
$B(\mathcal{H})=\Im(B(\mathcal{H}))$. Recall that a
$W^{*}$-algebra is called \emph{properly infinite} if it contains
no nonzero finite central projection. Since every element in a
properly infinite $W^{*}$-algebra $\A$ is a sum of at most five
idempotents \cite[Theorem 4]{Pea}, it follows that $\A=\Im(\A)$.
\par
Let $\A$ be an algebra. Recall that a non-zero ideal $\mathcal{I}$
of $\A$ is called \emph{essential} if it has non-zero intersection
with every non-zero ideal of $\A$. The \emph{socle} of $\A$,
$Soc(A)$, is the sum of all minimal left ideals of $\A$, or
minimal right ideals of $\A$, if they exists; otherwise it is
zero. From Remark 2 of \cite{Bur} we have the next proposition.
\begin{prop}\label{semi}
Let $\A$ be a semisimple Banach algebra with non-zero socle. If
$Soc(\A)$ is essential, then $\A$ as an $\A$-bimodule satisfies
$\mathbb{M}$.
\end{prop}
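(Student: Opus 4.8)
The plan is to take the ideal $\mathcal{J}=Soc(\A)$ and verify the two requirements of $\mathbb{M}$ for the bimodule $\M=\A$: first that $Soc(\A)\subseteq\Im(\A)$, and second that the only $a\in\A$ with $xax=0$ for all $x\in Soc(\A)$ is $a=0$. Both hypotheses of the proposition feed into exactly one of these two steps, so I would organize the argument around this split.

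For the inclusion $Soc(\A)\subseteq\Im(\A)$ I would appeal to the Wedderburn structure of the socle of a semiprime (here semisimple Banach) algebra. Each minimal left ideal has the form $\A e$ with $e$ a minimal idempotent, and any $x\in Soc(\A)$ lies in a finite sum of minimal left ideals, so it admits an idempotent local unit $e\in Soc(\A)$ (a finite sum of orthogonal minimal idempotents) with $ex=xe=x$; hence $x\in e\A e$. Since $e$ is such a finite sum, $e\A e$ is a finite-dimensional semisimple complex algebra, whence by Artin--Wedderburn it is a finite product of full matrix algebras $M_{n_i}(\mathbb{C})$. In each such factor every element is a linear combination of idempotents (for instance $E_{ij}=(E_{ii}+E_{ij})-E_{ii}$ with $E_{ii}+E_{ij}$ idempotent when $i\neq j$), so $x\in\Im(e\A e)\subseteq\Im(\A)$. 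This is the content I would extract from Remark 2 of \cite{Bur}.

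For the annihilation property \eqref{e}, I would first record that the left and right annihilators of $Soc(\A)$ vanish: each is a two-sided ideal whose intersection with $Soc(\A)$ has zero square, hence is $\{0\}$ by semiprimeness, and then essentiality of $Soc(\A)$ forces the annihilator itself to be $\{0\}$. Now suppose $xax=0$ for all $x\in Soc(\A)$. Replacing $x$ by $x+y$ gives the polarized identity $xay+yax=0$ for all $x,y\in Soc(\A)$; taking $x=e$ a minimal idempotent yields $eay+yae=0$ together with $eae=0$. Substituting $y\mapsto ey$ and using $eae=0$ shows $eyae=0$, and multiplying $eay+yae=0$ on the left by $e$ then gives $eay=0$ for all $y$; symmetrically, substituting $y\mapsto ye$ gives $yae=0$ for all $y$. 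Thus $ea$ and $ae$ lie in the left, respectively right, annihilator of $Soc(\A)$, so $ea=ae=0$ for every minimal idempotent $e$. Writing an arbitrary element of $Soc(\A)$ as a finite sum $\sum_j b_j e_j$ with $e_j$ minimal idempotents then yields $Soc(\A)\,a=0$, whence $a=0$ since the right annihilator of $Soc(\A)$ is trivial.

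The polarization computation is short and self-contained once the annihilators are known to vanish, which is precisely where essentiality is used. The step that genuinely relies on external structure theory is the inclusion $Soc(\A)\subseteq\Im(\A)$, resting on the finite-dimensional Wedderburn decomposition of $e\A e$; I would expect getting that inclusion stated correctly to be the main point, while the annihilator computation is the straightforward core of the argument.
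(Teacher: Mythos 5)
Your proof is correct, but note that the paper itself contains no argument for this proposition at all: it is stated as an immediate consequence of Remark 2 of \cite{Bur}, so what you have written is a self-contained reconstruction of that cited remark rather than a parallel to an in-paper proof. Your organization is exactly the right one (take $\mathcal{J}=Soc(\A)$, prove $Soc(\A)\subseteq\Im(\A)$, and use semiprimeness plus essentiality to kill the annihilators), and your polarization argument is complete and correct: from $xax=0$ on $Soc(\A)$ you get $xay+yax=0$, then $eay=0$ and $yae=0$ for every minimal idempotent $e$, hence $ea=ae=0$ because the left and right annihilators of the essential ideal $Soc(\A)$ vanish, and finally $Soc(\A)a=\{0\}$ forces $a=0$. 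The only step resting on unproved external structure theory is the inclusion $Soc(\A)\subseteq\Im(\A)$, which you obtain from the local-unit lemma (every $x\in Soc(\A)$ lies in $e\A e$ for some finite-rank idempotent $e\in Soc(\A)$) together with Artin--Wedderburn applied to the corner $e\A e$; that lemma is indeed standard, but it is not trivial. You could bypass it entirely with a more elementary observation: in a semiprime Banach algebra every minimal left ideal has the form $\A e$ with $e$ a minimal idempotent and $e\A e=\mathbb{C}e$ (Gelfand--Mazur), so any $u=ae\in\A e$ satisfies $u^2=\lambda u$, where $eae=\lambda e$; if $\lambda\neq0$ then $u/\lambda$ is an idempotent and $u=\lambda(u/\lambda)$, while if $\lambda=0$ then $eu=0$ and $ue=u$, so $e+u$ is an idempotent and $u=(e+u)-e$. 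Thus every element of every minimal left ideal, and hence every element of $Soc(\A)$, lies in $\Im(\A)$ with no appeal to local units, corner algebras, or Wedderburn theory, which makes the whole proof self-contained except for classical facts about minimal idempotents.
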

Let $\X$ be a Banach space. We denote by $\B(\X)$ the algebra of
all bounded linear operators on $\X$, and $ \mathcal{F}(\X)$
denotes the algebra of all finite rank operators in $\B(\X)$.
Recall that a subalgebra $\A$ of the algebra $\B(\X)$ is called
\emph{standard} if $\A$ contains the identity and the ideal $
\mathcal{F}(\X)$. If $\A$ is a standard operator algebra on a
Banach space $\X$, then $\A$ is primitive and
$Soc(\A)=\mathcal{F}(\X)$ is essential. Thus,
Proposition~\ref{semi} applied for standard operator algebras.
\par
A \emph{nest} $\N$ on a Banach space $\X$ is a chain of closed
(under norm topology) subspaces of $\X$ which is closed under the
formation of arbitrary intersection and closed linear span
(denoted by $\vee$), and which includes $\{0\}$ and $\X$. The
\emph{nest algebra} associated to the nest $\N$, denoted by
$Alg\N$, is the weak closed operator algebra of the form
\[ Alg\mathcal{N}=\{ T\in \mathcal{B}(\X)\,|\, T(N)\subseteq N
\,for\, all \,N\in \mathcal{N}\}. \] When $\N \neq \{\{0\},X\}$,
we say that $\N$ is non-trivial. It is clear that if $\N$ is
trivial, then $Alg\N =\B(\X)$. Denote $Alg_{\mathcal{F}}\N:= Alg\N
\cap \mathcal{F}(\X)$, the set of all finite rank operators in
$Alg\N$ and for $N\in \N$, let $N_{-} = \vee \{M\in \N\,|\,M
\subset N \}$.
\begin{prop}\label{nest}
Let $\N$ be a nest on a Banach space $\X$. If $N\in \N$ is
complemented in $\X$ whenever $N_{-} = N$, then $\B(\X)$ as a
$Alg\N$-bimodule satisfies $\mathbb{M}$.
\end{prop}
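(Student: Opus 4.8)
The plan is to verify condition $\mathbb{M}$ directly, taking as the required ideal $\mathcal{J}=Alg_{\mathcal{F}}\N=Alg\N\cap\mathcal{F}(\X)$, the finite rank operators lying in the nest algebra. First I would note that $\mathcal{J}$ is a two-sided ideal of $Alg\N$, being the intersection of the subalgebra $Alg\N$ with the ideal $\mathcal{F}(\X)$ of $\B(\X)$. It then remains to check the two defining clauses of $\mathbb{M}$: that $\mathcal{J}\subseteq\Im(Alg\N)$ and that $\{T\in\B(\X):xTx=0\text{ for all }x\in\mathcal{J}\}=\{0\}$. Throughout I write $x\otimes f$ for the rank one operator $y\mapsto f(y)x$ (with $x\in\X$, $f\in\X^{*}$), and I use the classical description of rank operators in a nest algebra: $x\otimes f\in Alg\N$ iff $x\in N$ and $f\in N_{-}^{\perp}$ for some $N\in\N$, while every element of $\mathcal{J}$ is a finite sum of such rank one operators. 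For $x\neq 0$ let $N_{x}=\bigcap\{N\in\N:x\in N\}\in\N$ be the smallest element of the nest containing $x$.

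For $\mathcal{J}\subseteq\Im(Alg\N)$ it suffices, by the rank one decomposition, to express each rank one $x\otimes f\in Alg\N$ as a linear combination of idempotents of $Alg\N$. If $f(x)\neq 0$, then $f(x)^{-1}\,x\otimes f$ is idempotent. If $f(x)=0$ and $(N_{x})_{-}\subsetneq N_{x}$, then $x\notin(N_{x})_{-}$, so Hahn--Banach yields $g\in(N_{x})_{-}^{\perp}$ with $g(x)=1$; both $x\otimes(f+g)$ and $x\otimes g$ are rank one idempotents in $Alg\N$, and their difference is $x\otimes f$. The remaining, decisive case is $f(x)=0$ with $(N_{x})_{-}=N_{x}$, and this is exactly where the hypothesis enters. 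Here $N_{x}$ is complemented, say $\X=N_{x}\oplus W$, and since $\N$ is a chain the projection $P$ onto $N_{x}$ along $W$ satisfies $P(M)\subseteq M$ for every $M\in\N$, so $P\in Alg\N$. Since $Px=x$ and $f$ annihilates the range $N_{x}$ of $P$ (because $f\in N_{-}^{\perp}\subseteq N_{x}^{\perp}$), one checks that $P+x\otimes f$ is again an idempotent of $Alg\N$, whence $x\otimes f=(P+x\otimes f)-P$ is a difference of two idempotents.

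For the separation clause, suppose $xTx=0$ for all $x\in\mathcal{J}$. Testing on rank two elements $R=x_{1}\otimes f_{1}+x_{2}\otimes f_{2}\in\mathcal{J}$ and extracting, for $\{x_{1},x_{2}\}$ and $\{f_{1},f_{2}\}$ independent, the coefficient of $x_{1}\otimes f_{2}$ in $RTR=\sum_{i,j}f_{i}(Tx_{j})\,x_{i}\otimes f_{j}=0$, I obtain $f_{1}(Tx_{2})=0$. Fix $\xi$ with $(N_{\xi})_{-}\neq\X$, pick any nonzero $N_{1}\in\N$ and any $f_{1}\in(N_{1})_{-}^{\perp}$, and take $x_{2}=\xi$ with auxiliary $x_{1}\in N_{1}$ and $f_{2}\in(N_{\xi})_{-}^{\perp}$ in general position; this gives $f_{1}(T\xi)=0$ for every $f_{1}\in(N_{1})_{-}^{\perp}$, that is, $T\xi\in(N_{1})_{-}$. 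Letting $N_{1}$ range over all nonzero elements of $\N$ yields $T\xi\in\bigcap\{N_{-}:0\neq N\in\N\}=\{0\}$ (the intersection is $\{0\}$ since either the nest has no least nonzero element, whence already $\bigcap_{N\neq 0}N=\{0\}$, or its least nonzero element $N_{0}$ satisfies $(N_{0})_{-}=\{0\}$). As the $\xi$ with $(N_{\xi})_{-}\neq\X$ are dense in $\X$, boundedness of $T$ forces $T=0$.

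I expect the main obstacle to be precisely the continuity case $(N_{x})_{-}=N_{x}$ of the second paragraph: without the complementation hypothesis there need be no idempotent of $Alg\N$ to absorb the nilpotent rank one $x\otimes f$, and the Hahn--Banach trick is unavailable because no $g\in(N_{x})_{-}^{\perp}$ can have $g(x)=1$. Producing the projection $P\in Alg\N$ from a complement of $N_{x}$ is the one genuinely essential use of the hypothesis; the independence/general-position adjustments in the separation step are routine, since they can be arranged after perturbing $f_{1}$ within $(N_{1})_{-}^{\perp}$ and using continuity of $f_{1}\mapsto f_{1}(T\xi)$.
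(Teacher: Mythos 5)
Your proof is correct, but it takes a genuinely different route from the paper's, which is short because it delegates both halves to the literature. The paper uses the same ideal $\mathcal{J}=Alg_{\mathcal{F}}\N$, but quotes Hou--Zhang \cite{Hou} for the inclusion $Alg_{\mathcal{F}}\N\subseteq \Im(Alg\N)$ (that citation is exactly where the complementation hypothesis is consumed), and for the separation clause it polarizes $FTF=0$ into $F_{1}TF_{2}+F_{2}TF_{1}=0$ and then invokes the density theorem $\overline{Alg_{\mathcal{F}}\N}^{SOT}=Alg\N$ of Spanoudakis \cite{Spa}, letting a net $F_{\gamma}\to I$ strongly to deduce first $FT+TF=0$ and then $T=0$. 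You instead do both halves by hand: your three-case passage from rank ones to idempotents (scaling when $f(x)\neq0$; a Hahn--Banach difference of idempotents when $x\notin (N_{x})_{-}$; and $P+x\otimes f$, with $P$ the projection onto $N_{x}$ along a complement, in the continuous case $(N_{x})_{-}=N_{x}$) is in substance a reproof of the Hou--Zhang lemma --- and your check that $P\in Alg\N$ is right, since $P$ fixes nest elements below $N_{x}$ and maps those above it into $N_{x}$ --- while your rank-one/rank-two testing, together with $\bigcap\{N_{-}\,:\,0\neq N\in\N\}=\{0\}$ and the density of the vectors $\xi$ with $(N_{\xi})_{-}\neq\X$, replaces the SOT-density theorem altogether. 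The ``general position'' issues you defer really are routine: whenever $f_{1},f_{2}$ cannot be chosen independent one necessarily has $(N_{\xi})_{-}^{\perp}\subseteq\mathbb{C}f_{1}$, so $\xi\otimes f_{1}\in Alg\N$ and the rank-one test already yields $f_{1}(T\xi)=0$. So your approach buys self-containedness at the price of length; the paper's buys brevity at the price of two nontrivial citations.

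One point you treat too lightly: the ``classical description'' that every element of $Alg_{\mathcal{F}}\N$ is a finite sum of rank one operators of $Alg\N$ is Erdos's theorem for Hilbert-space nests, but for nests on a Banach space it is a genuine theorem --- in fact a result of the same paper \cite{Spa} --- and should be cited rather than asserted. Better still, you can avoid it entirely: the property $\mathbb{M}$ only asks for \emph{some} ideal, so take $\mathcal{J}$ to be the linear span of the rank one operators of $Alg\N$. This is a two-sided ideal of $Alg\N$ because $A(x\otimes f)B=(Ax)\otimes(f\circ B)$, and your whole argument --- the idempotent construction and the separation step, which only ever tests on rank one and rank two elements --- goes through verbatim for this smaller ideal, making the proof fully self-contained.
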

\begin{proof}
$Alg_{\mathcal{F}}\N$ is an ideal of $Alg\N$ and from \cite{Hou},
it is contained in the $\Im(Alg\N)$. Suppose that $T\in \B(\X)$
and $FTF=0$ for each $F\in Alg_{\mathcal{F}}\N$. So we have
$(F_{1}+F_{2})T(F_{1}+F_{2})=0$ and hence
$F_{1}TF_{2}+F_{2}TF_{1}=0$, for any $F_{1},F_{2}\in
Alg_{\mathcal{F}}\N$. By \cite{Spa} we have
$\overline{Alg_{\mathcal{F}}\N}^{SOT}=Alg\N$. Therefore there is
a net $(F_{\gamma})_{\gamma \in \Gamma}$ in $Alg_{\mathcal{F}}\N$
converges to the identity operator $I$ with respect to the strong
operator topology. So $FT F_{\gamma}+F_{\gamma}T F=0$ for each
$\gamma \in \Gamma$ and $F\in Alg_{\mathcal{F}}\N$. Thus
$FT+TF=0$ for all $F\in Alg_{\mathcal{F}}\N$ and hence
$F_{\gamma}T+TF_{\gamma}=0$ for all $\gamma\in \Gamma$. So $T=0$
and $\B(\X)$ satisfies $\mathbb{M}$.
\end{proof}
It is obvious that the nests on Hilbert spaces, finite nests and
the nests having order-type $\omega + 1$ or $1 + \omega^{*}$,
where $\omega$ is the order-type of the natural numbers, satisfy
the condition in Proposition~\ref{nest} automatically.

\section{Bilinear maps vanishing on zero products}
In this section we concern with bilinear maps on algebras. From
this point up to the last section $\A$ is an algebra.
\par
The algebra $\A$ is called \emph{zero product determined} if for
every linear space $\X$ and every bilinear map $\PH$, the
following holds. If $\phi(a,b)=0$ whenever $ab=0$, then there
exists a linear map $T: \A \rightarrow \X$ such that $\phi(a,b)
=T(ab)$ for all $a,b\in \A$. If the ordinary product is replaced
by the Jordan product, then it is said that $\A$ is \emph{zero
Jordan product determined}.
\par
We will show that any unital Banach algebra spanned by idempotents
is zero product determined and zero Jordan product determined.
\begin{thm}\label{zp}
Let $\X$ be a linear space and let $\PH$ be a bilinear map
satisfying
\[ a,b\in \A, \quad ab=0 \Rightarrow \phi(a,b)=0.\]
Then
\[ \phi(a,x)=\phi(ax,1) \quad and \quad \phi(x,a)=\phi(1,xa) \]
for all $a\in \A$ and $x\in \Im(\A)$. Indeed, if $\A=\Im(\A)$,
then $\A$ is zero product determined.
\end{thm}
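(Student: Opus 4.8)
The plan is to reduce everything to the case of a single idempotent and then extend by bilinearity. First I would fix an idempotent $e \in \A$ and prove the two basic identities $\phi(a,e) = \phi(ae,1)$ and $\phi(e,a) = \phi(1,ea)$ for every $a \in \A$. For the first, I would exploit two vanishing products: since $(a - ae)e = ae - ae = 0$, the hypothesis gives $\phi(a - ae, e) = 0$, that is $\phi(a,e) = \phi(ae,e)$; and since $ae(1-e) = 0$, the hypothesis gives $\phi(ae, 1-e) = 0$, that is $\phi(ae,1) = \phi(ae,e)$. Combining the two yields $\phi(a,e) = \phi(ae,1)$. The second identity is entirely symmetric: from $e(a - ea) = 0$ one gets $\phi(e,a) = \phi(e,ea)$, and from $(1-e)(ea) = 0$ one gets $\phi(1,ea) = \phi(e,ea)$, whence $\phi(e,a) = \phi(1,ea)$.

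Having the identities for idempotents, I would pass to an arbitrary $x \in \Im(\A)$ by writing $x = \sum_i \lambda_i e_i$ as a finite linear combination of idempotents $e_i$ and invoking bilinearity of $\phi$. For fixed $a$, linearity in the second variable gives $\phi(a,x) = \sum_i \lambda_i \phi(a, e_i) = \sum_i \lambda_i \phi(a e_i, 1)$, while linearity in the first variable gives $\phi(ax, 1) = \sum_i \lambda_i \phi(a e_i, 1)$; these agree, so $\phi(a,x) = \phi(ax,1)$. The identity $\phi(x,a) = \phi(1,xa)$ follows in the same way from the second idempotent identity.

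For the final assertion, suppose $\A = \Im(\A)$. Then every element of $\A$ lies in $\Im(\A)$, so I may apply the identity $\phi(a,x) = \phi(ax,1)$ with $x = b$ arbitrary in $\A$. Defining $T : \A \to \X$ by $T(c) = \phi(c,1)$, which is linear because $\phi$ is linear in its first argument, I obtain $\phi(a,b) = \phi(ab,1) = T(ab)$ for all $a,b \in \A$. This is exactly the statement that $\A$ is zero product determined.

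I expect the only real subtlety to lie in selecting the correct vanishing products in the first step; once the four relations $(a-ae)e = 0$, $ae(1-e) = 0$, $e(a-ea) = 0$, and $(1-e)(ea) = 0$ are in hand, the remainder is routine bookkeeping with bilinearity. No genuine obstacle arises, since the passage from idempotents to $\Im(\A)$ and the construction of $T$ are immediate.
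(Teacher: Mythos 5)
Your proof is correct and takes essentially the same route as the paper: your relations $(a-ae)e=0$ and $ae(1-e)=0$ are exactly the paper's splitting $a=ap+aq$, $q=1-p$, with $(aq)p=0$ and $(ap)q=0$, followed by the same linearity extension to $\Im(\A)$ and the same definition $T(c)=\phi(c,1)$.
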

\begin{proof}
Let $a\in \A$. For arbitrary idempotent $p\in \A$, let $q=1-p$. We
have
\[\phi(a,p)=\phi(ap,p)+\phi(aq,p)=\phi(ap,p),\]
since $(aq)p=0$. On the other hand we have
\[\phi(ap,1)=\phi(ap,p)+\phi(ap,q)=\phi(ap,p).\]
By comparing the two expressions for $\phi(ap,p)$, we arrive at
$\phi(a,p)=\phi(ap,1)$. Since every $x\in \Im(\A)$ is a linear
combination of idempotent elements in $\A$, we get
\[ \phi(a,x)=\phi(ax,1) \]
for all $a\in \A$ and $x\in \Im(\A)$. Similarly, we get
$\phi(x,a)=\phi(1,xa)$ for all $a\in \A$ and $x\in \Im(\A)$.
\par
Now suppose that $\A=\Im(\A)$. Let $\X$ be a linear space, and let
$\PH$ be a bilinear map such that for all $a,b \in \A$, $ab= 0$
implies $\phi(a,b)=0$. From above identity we have
\[ \phi(a,b)=\phi(ab,1) \]
for all $a,b\in \A$, since $\A=\Im(\A)$. If we define the linear
map $T:\A \rightarrow \X$ by $T(a)=\phi(a,1)$, then $T$ satisfies
all the requirements in the definition of zero product determined
algebras. Thus $\A$ is a zero product determined algebra.
\end{proof}
\begin{prop}\label{bzp}
Let $\A$ be a Banach algebra, let $\X$ be a Banach space and let
$\PH$ be a continuous bilinear map satisfying
\[ a,b\in \A, \quad ab=0 \Rightarrow \phi(a,b)=0.\]
Then
\[ \phi(a,x)=\phi(ax,1) \quad and \quad \phi(x,a)=\phi(1,xa) \]
for all $a\in \A$ and $x\in \overline{\Im(\A)}$. If
$\A=\overline{\Im(\A)}$, then there exists a continuous linear
map $T: \A \rightarrow \X$ such that $\phi(a,b) =T(ab)$ for all
$a,b\in \A$.
\end{prop}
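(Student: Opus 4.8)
The plan is to leverage the purely algebraic identities already established in Theorem~\ref{zp} and then push them from $\Im(\A)$ to its closure by a routine continuity-and-density argument, exploiting the fact that $\phi$ is continuous. First I would observe that $\phi$ satisfies exactly the hypothesis of Theorem~\ref{zp}, so for every $a\in\A$ and every $x\in\Im(\A)$ we already have $\phi(a,x)=\phi(ax,1)$ and $\phi(x,a)=\phi(1,xa)$.

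Next, I would fix $a\in\A$ and consider the map $x\mapsto\phi(a,x)-\phi(ax,1)$ from $\A$ into $\X$. Since $\phi$ is continuous and bilinear, each of $x\mapsto\phi(a,x)$ and $y\mapsto\phi(y,1)$ is a bounded linear map; composing the latter with the bounded left-multiplication operator $x\mapsto ax$ shows that $x\mapsto\phi(ax,1)$ is continuous as well. Hence the difference is a continuous map that vanishes on $\Im(\A)$ by the first step, and therefore vanishes on the closure $\overline{\Im(\A)}$, because $\X$ is Hausdorff and $\Im(\A)$ is dense in $\overline{\Im(\A)}$. This yields $\phi(a,x)=\phi(ax,1)$ for all $a\in\A$ and $x\in\overline{\Im(\A)}$; the identity $\phi(x,a)=\phi(1,xa)$ follows in precisely the same way, using right multiplication $x\mapsto xa$ in place of left multiplication.

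For the final assertion, assume $\A=\overline{\Im(\A)}$. Then the first identity applies with $x$ ranging over all of $\A$, so $\phi(a,b)=\phi(ab,1)$ for every $a,b\in\A$. I would then define $T:\A\rightarrow\X$ by $T(c)=\phi(c,1)$; this is linear because $\phi$ is linear in its first slot, and continuous because $\phi$ is continuous (indeed $\|T(c)\|\le\|\phi\|\,\|1\|\,\|c\|$). By construction $\phi(a,b)=\phi(ab,1)=T(ab)$ for all $a,b\in\A$, which is the desired conclusion.

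There is no genuinely hard step here: the only point requiring care is the continuity bookkeeping in the second paragraph, specifically the verification that $x\mapsto\phi(ax,1)$ is continuous, which reduces to the boundedness of left (respectively right) multiplication on the Banach algebra $\A$. Once the identities have been extended to $\overline{\Im(\A)}$, the construction of $T$ and the verification of its linearity and continuity are immediate.
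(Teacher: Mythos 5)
Your proposal is correct and follows essentially the same route as the paper: the identities are established on $\Im(\A)$ via Theorem~\ref{zp} (the paper redoes that argument rather than citing it, but that is the same content) and are then extended to $\overline{\Im(\A)}$ by continuity and density, after which $T(a)=\phi(a,1)$ gives the conclusion. Your write-up merely makes explicit the continuity bookkeeping that the paper compresses into the phrase ``a similar proof \ldots and the fact that $\phi$ is continuous.''
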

\begin{proof}
A similar proof as that of Theorem~\ref{zp} and the fact that
$\phi$ is continuous, shows that $\phi(a,x)=\phi(ax,1)$ for all
$a\in \A$ and $x\in \overline{\Im(\A)}$. If
$\A=\overline{\Im(\A)}$, we find
\[ \phi(a,b)=\phi(ab,1) \]
for all $a,b\in \A$. Now we define the linear mapping $T:\A
\rightarrow \X$ by $T(a)=\phi(a,1)$. So we have $\phi(a,b)=T(ab)$
for all $a,b\in \A$, and since $\phi$ is continuous, $T$ is
continuous.
\end{proof}
\begin{thm}\label{Jzp}
Let $\X$ be a linear space and let $\PH$ be a bilinear map
satisfying
\[ a,b\in \A, \quad a\circ b=0 \Rightarrow \phi(a,b)=0.\]
Then
\[ \phi(a,x)= \frac{1}{2}\phi(ax,1)+\frac{1}{2}\phi(xa,1) \]
for all $a\in \A$ and $x\in \Im(\A)$. Indeed, if $\A=\Im(\A)$,
then $\A$ is zero Jordan product determined.

\end{thm}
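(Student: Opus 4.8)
The plan is to establish the displayed identity by mirroring the argument of Theorem~\ref{zp}, replacing the ordinary product by the Jordan product and exploiting the orthogonal Peirce decomposition determined by an idempotent. I would fix $a\in\A$ and an arbitrary idempotent $p\in\A$, set $q=1-p$ so that $p,q$ are orthogonal idempotents with $pq=qp=0$ and $p+q=1$, and write $a=pap+paq+qap+qaq$. By bilinearity it then suffices to evaluate $\phi$ on each Peirce component paired with $p$, and to compare the outcome with the right-hand side after expanding $ap=pap+qap$ and $pa=pap+paq$.

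The heart of the matter is to locate enough vanishing Jordan products so that the hypothesis applies. Two of the four components are immediate: since $(qaq)\circ p=qaqp+pqaq=0$ we get $\phi(qaq,p)=0$, and since $(pap)\circ q=papq+qpap=0$ we get $\phi(pap,q)=0$, whence $\phi(pap,p)=\phi(pap,1)$. The decisive observation, and the source of the coefficient $\tfrac12$, is that $(paq)\circ(2p-1)=0$ and $(qap)\circ(2p-1)=0$; writing $2p-1=p-q$ and computing each Jordan product directly confirms these vanish by orthogonality. Consequently $\phi(paq,p)=\phi(paq,q)$ and $\phi(qap,p)=\phi(qap,q)$, and since $\phi(\,\cdot\,,1)=\phi(\,\cdot\,,p)+\phi(\,\cdot\,,q)$ this forces $\phi(paq,p)=\tfrac12\phi(paq,1)$ and $\phi(qap,p)=\tfrac12\phi(qap,1)$.

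Assembling the four identities yields $\phi(a,p)=\phi(pap,1)+\tfrac12\phi(paq,1)+\tfrac12\phi(qap,1)$, which is exactly $\tfrac12\phi(ap,1)+\tfrac12\phi(pa,1)$ once $ap$ and $pa$ are expanded as above. Because every $x\in\Im(\A)$ is a linear combination of idempotents, bilinearity promotes this to $\phi(a,x)=\tfrac12\phi(ax,1)+\tfrac12\phi(xa,1)$ for all $a\in\A$ and $x\in\Im(\A)$. Finally, when $\A=\Im(\A)$ the identity holds for all $a,b\in\A$, so defining $T\colon\A\rightarrow\X$ by $T(c)=\tfrac12\phi(c,1)$ gives $\phi(a,b)=\tfrac12\phi(ab,1)+\tfrac12\phi(ba,1)=T(a\circ b)$, exhibiting $\A$ as zero Jordan product determined.

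The main obstacle is bookkeeping rather than depth: one must correctly match the half-coefficients produced by testing against $2p-1$ against the Peirce expansions of $ap$ and $pa$, checking that the cross terms $paq$ and $qap$ are distributed symmetrically, one into each of $pa$ and $ap$. Identifying the auxiliary element $2p-1$ (equivalently, testing against $p-q$) is the one genuinely nonroutine step, since it is this symmetric substitution that encodes the Jordan-product symmetry and produces the factor $\tfrac12$; everything else reduces to the orthogonality relations $pq=qp=0$ and the defining property of $\phi$.
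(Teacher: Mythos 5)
Your proposal is correct and follows essentially the same route as the paper's own proof: the same Peirce decomposition $a=pap+paq+qap+qaq$, the same key vanishing Jordan products $(p-q)\circ paq=0$, $(p-q)\circ qap=0$, $pap\circ q=0$, $qaq\circ p=0$ (your $2p-1$ is exactly the paper's $p-q$), and the same definition $T(c)=\tfrac12\phi(c,1)$ at the end. The only difference is cosmetic: you compute $\phi(a,p)$ componentwise and match it to $\tfrac12\phi(ap,1)+\tfrac12\phi(pa,1)$, whereas the paper runs the identical chain of identities in the opposite direction.
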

\begin{proof}
Let $a,p\in \A$ with $p^{2}=p$ and let $q=1-p$. We have $(p-q)
\circ paq=0$ and $(p-q)\circ qap=0$. So $\phi(paq,p-q)=0$ and
$\phi(qap,p-q)=0$. Hence $\phi(paq,p)=\phi(paq,q)$ and
$\phi(qap,p)=\phi(qap,q)$. Therefore
\begin{equation*}
\begin{split}
& \phi(paq,p)=\frac{1}{2}\phi(paq,1);\, and\\ &
\phi(qap,p)=\frac{1}{2}\phi(qap,1).
\end{split}
\end{equation*}
By these identities and the fact that $pap\circ q=0$ and
$qaq\circ p=0$, we have
\begin{equation*}
\begin{split}
&\frac{1}{2}\phi(ap,1)+\frac{1}{2}\phi(pa,1)=
\frac{1}{2}\phi(pap,p)+\frac{1}{2}\phi(qap,p)+\\&
\frac{1}{2}\phi(qap,q)+\frac{1}{2}\phi(pap,p)+
\frac{1}{2}\phi(paq,p)+\frac{1}{2}\phi(paq,q)=\\&
\phi(pap,p)+\frac{1}{2}\phi(qap,1)+\frac{1}{2}\phi(paq,1)=\\&
\phi(pap+qap+paq+qaq,p)=\phi(a,p).
\end{split}
\end{equation*}
Since every $x\in \Im(\A)$ is a linear combination of idempotent
elements in $\A$, we get
\[ \phi(a,x)= \frac{1}{2}\phi(ax,1)+\frac{1}{2}\phi(xa,1)\]
for all $a\in \A$ and $x\in \Im(\A)$.
\par
Now let $\A=\Im(\A)$, $\X$ be a linear space, and let $\PH$ be a
bilinear map such that for all $a,b \in \A$, $a\circ b= 0$
implies $\phi(a,b)=0$. If we define the linear map $T:\A
\rightarrow \X$ by $T(a)=\frac{1}{2}\phi(a,1)$, then $T$
satisfies all the requirements in the definition of zero Jordan
product determined algebras. Thus $\A$ is a zero Jordan product
determined algebra.
\end{proof}
\begin{prop}\label{BJzp}
Let $\A$ be a Banach algebra, let $\X$ be a Banach space and let
$\PH$ be a continuous bilinear map satisfying
\[ a,b\in \A, \quad a\circ b=0 \Rightarrow \phi(a,b)=0.\]
Then
\[ \phi(a,x)= \frac{1}{2}\phi(ax,1)+\frac{1}{2}\phi(xa,1)\]
for all $a\in \A$ and $x\in \overline{\Im(\A)}$. If
$\A=\overline{\Im(\A)}$, then there exists a continuous linear
map $T: \A \rightarrow \X$ such that $\phi(a,b) =T(a\circ b)$ for
all $a,b\in \A$.
\end{prop}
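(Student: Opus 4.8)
The plan is to reproduce, almost verbatim, the passage from Theorem~\ref{zp} to Proposition~\ref{bzp}: first establish the identity on the dense subspace $\Im(\A)$, then propagate it to $\overline{\Im(\A)}$ by continuity. The algebraic heart of the argument is already contained in Theorem~\ref{Jzp}, and the continuity of $\phi$ plays no role there, so I would simply quote it: because $\phi$ satisfies $a\circ b=0 \Rightarrow \phi(a,b)=0$, the idempotent computation in Theorem~\ref{Jzp} yields
\[ \phi(a,x)=\frac{1}{2}\phi(ax,1)+\frac{1}{2}\phi(xa,1) \]
for every $a\in\A$ and every $x\in\Im(\A)$.

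Next I would fix $a\in\A$ and regard the two sides as functions of the second variable, $x\mapsto\phi(a,x)$ and $x\mapsto \frac{1}{2}\phi(ax,1)+\frac{1}{2}\phi(xa,1)$, defined on all of $\A$. The first is continuous because $\phi$ is a continuous bilinear map; the second is continuous because the maps $x\mapsto ax$ and $x\mapsto xa$ are continuous in a Banach algebra and $y\mapsto\phi(y,1)$ is continuous. Two continuous maps that coincide on $\Im(\A)$ must coincide on its closure $\overline{\Im(\A)}$, which gives the asserted identity for all $x\in\overline{\Im(\A)}$.

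Finally, assuming $\A=\overline{\Im(\A)}$, the identity holds for every $x=b\in\A$; using bilinearity to write $\frac{1}{2}\phi(ab,1)+\frac{1}{2}\phi(ba,1)=\frac{1}{2}\phi(ab+ba,1)=\frac{1}{2}\phi(a\circ b,1)$ gives $\phi(a,b)=\frac{1}{2}\phi(a\circ b,1)$. Defining $T:\A\rightarrow\X$ by $T(c)=\frac{1}{2}\phi(c,1)$ then produces a linear map with $\phi(a,b)=T(a\circ b)$ for all $a,b\in\A$, and $T$ is continuous since $\|T(c)\|=\frac{1}{2}\|\phi(c,1)\|\le\frac{1}{2}\|\phi\|\,\|1\|\,\|c\|$. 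I expect no genuine obstacle here; the only point requiring care is the continuity of $x\mapsto\frac{1}{2}\phi(ax,1)+\frac{1}{2}\phi(xa,1)$, and this is precisely where the Banach-algebra structure---joint continuity of the product, i.e.\ submultiplicativity of the norm---is used, exactly as in Proposition~\ref{bzp}.
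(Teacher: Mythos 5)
Your proposal is correct and follows exactly the route the paper takes: the purely algebraic idempotent computation of Theorem~\ref{Jzp} gives the identity on $\Im(\A)$, continuity of $\phi$ (and of multiplication) extends it to $\overline{\Im(\A)}$, and then $T(a)=\frac{1}{2}\phi(a,1)$ does the job. The paper compresses all of this into one sentence (``similar arguments\ldots and the fact that $\phi$ is continuous''); you have merely spelled out the density argument and the norm estimate for $T$, which is exactly what that sentence intends.
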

\begin{proof}
By using similar arguments as that in the proof of
Theorem~\ref{Jzp} and the fact that $\phi$ is continuous, it
follows that $\phi(a,x)=
\frac{1}{2}\phi(ax,1)+\frac{1}{2}\phi(xa,1)$ for all $a\in \A$
and $x\in \overline{\Im(\A)}$. If $\A=\overline{\Im(\A)}$, we get
\[ \phi(a,b)= \frac{1}{2}\phi(ab,1)+\frac{1}{2}\phi(ba,1)\]
for all $a,b\in \A$. Define $T:\A \rightarrow \X$ by
$T(a)=\frac{1}{2}\phi(a,1)$. Then $T$ is continuous and
$\phi(a,b) =T(a\circ b)$ for all $a,b\in \A$.
\end{proof}
We continue by studying the condition $(G)$.
\begin{thm}\label{ds}
Let $\X$ be a linear space and let $\PH$ be a bilinear map
satisfying $(G)$. Then
\[ \phi(a,x)+\phi(x,a)=\phi(ax,1)+\phi(1,xa) \quad and \quad \phi(x,1)=\phi(1,x)\]
for all $a\in \A$ and $x\in \Im(\A)$. Indeed, if $\A=\Im(\A)$,
then
\[ \phi(a,b)+\phi(b,a)=\phi(ab,1)+\phi(1,ba)\quad and \quad \phi(a,1)=\phi(1,a)\]
for all $a,b\in \A$.
\end{thm}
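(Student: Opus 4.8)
The plan is to prove both identities first with a single idempotent in place of $x$, and then to extend by linearity: since $\Im(\A)$ is by definition the linear span of idempotents and each side of each asserted identity is linear in $x$ (for the first identity note that $x\mapsto ax$ and $x\mapsto xa$ are linear and $\phi$ is bilinear), it suffices to treat $x=p$ with $p^{2}=p$. Throughout I set $q=1-p$. Two preliminary observations drive everything. First, $pq=qp=0$, so $(G)$ gives $\phi(p,q)=\phi(q,p)=0$. Second, $pap\cdot q=q\cdot pap=0$ and $qaq\cdot p=p\cdot qaq=0$, so the associated values of $\phi$ vanish; more generally $(paq)^{2}=(qap)^{2}=0$.

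The second identity is then immediate: writing $1=p+q$ and using $\phi(p,q)=\phi(q,p)=0$ gives $\phi(p,1)=\phi(p,p)=\phi(1,p)$, and linearity upgrades this to $\phi(x,1)=\phi(1,x)$ on $\Im(\A)$.

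For the first identity I would fix $a$ and rewrite the two sides using $a=ap+aq$ and $1=p+q$. A short computation gives
\[ \phi(a,p)-\phi(ap,1)=\phi(aq,p)-\phi(ap,q),\qquad \phi(p,a)-\phi(1,pa)=\phi(p,qa)-\phi(q,pa), \]
so the claim is equivalent to $\phi(aq,p)+\phi(p,qa)=\phi(ap,q)+\phi(q,pa)$. Replacing $aq,\,qa,\,ap,\,pa$ by their Peirce components and deleting the terms already known to vanish, this reduces to the single off-diagonal relation
\[ \phi(paq,p)+\phi(p,qap)=\phi(qap,q)+\phi(q,paq). \]

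The heart of the matter—and the step I expect to be the genuine obstacle—is to extract such off-diagonal relations from $(G)$, since none of the individual pairs $(paq,p)$, $(p,qap)$, $(qap,q)$, $(q,paq)$ annihilates on both sides. The device is to feed $(G)$ genuinely orthogonal idempotents manufactured from the off-diagonal pieces: using $(paq)^{2}=(qap)^{2}=0$ one checks that $q+paq$ and $p+qap$ are idempotents. For any idempotent $e$ one has $e(1-e)=(1-e)e=0$, hence $\phi(e,1-e)=0$ by $(G)$. With $e=q+paq$ (so $1-e=p-paq$), expanding $\phi(q+paq,\,p-paq)=0$ by bilinearity and cancelling the vanishing terms $\phi(q,p)$ and $\phi(paq,paq)$ leaves $\phi(paq,p)=\phi(q,paq)$; with $e=p+qap$ (so $1-e=q-qap$) one likewise obtains $\phi(qap,q)=\phi(p,qap)$. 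Substituting these two relations into the reduced identity makes its two sides coincide, proving the first identity for $p$. Linearity then delivers it for all $x\in\Im(\A)$, and when $\A=\Im(\A)$ every $b\in\A$ is an admissible value of $x$, so setting $x=b$ in the two identities yields the asserted global statements.
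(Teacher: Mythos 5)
Your proof is correct and takes essentially the same route as the paper: both establish the two identities for a single idempotent $p$ (with $q=1-p$) and extend by linearity, and both extract the key off-diagonal relations $\phi(paq,p)=\phi(q,paq)$ and $\phi(p,qap)=\phi(qap,q)$ by feeding $(G)$ orthogonal pairs built from the square-zero elements $paq$ and $qap$ --- the paper's pairs $(p+paq,\,q-paq)$ and $(p+qap,\,q-qap)$ are exactly the complementary idempotents you use, up to relabeling. The only difference is organizational: you first reduce the target identity to the single off-diagonal relation and then verify it, whereas the paper expands $\phi(a,p)+\phi(p,a)$ directly through the Peirce components.
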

\begin{proof}
Let $a,p\in \A$ with $p^{2}=p$ and let $q=1-p$. Since $pq=qp=0$,
we see that
\[ \phi(p,q)=\phi(p,1)-\phi(p,p)=0 \quad and \quad \phi(q,p)=\phi(1,p)-\phi(p,p)=0.\]
So $\phi(p,1)=\phi(1,p)$. By linearity, it shows
\[\phi(x,1)=\phi(1,x)\]
for all $x\in \Im(\A)$. Now we have
$(p+paq)(q-paq)=(q-paq)(p+paq)=0$ and
$(p+qap)(q-qap)=(q-qap)(p+qap)=0$. So $\phi(p+paq,q-paq)=0$ and
$\phi(p+qap,q-qap)=0$. Hence
\[\phi(paq,p)=\phi(q,paq)\quad and \quad
\phi(p,qap)=\phi(qap,q).\]
By these identities and the fact that $(pap)q=q(pap)=0$ and
$(qaq)p=p(qaq)=0$, we have
\begin{equation*}
\begin{split}
\phi(a,p)+\phi(p,a)&=\phi(pap,p)+\phi(paq,p)+\phi(qap,p)\\&+\phi(p,pap)+\phi(p,paq)+\phi(p,qap)\\&
=\phi(pap,p)+\phi(q,paq)+\phi(qap,p)\\&+\phi(p,pap)+\phi(p,paq)+\phi(qap,q)\\&
=\phi(ap,1)+\phi(1,pa)
\end{split}
\end{equation*}
Since every $x\in \Im(\A)$ is a linear combination of idempotent
elements in $\A$, we get
\[ \phi(a,x)+\phi(x,a)=\phi(ax,1)+\phi(1,xa) \]
for all $a\in \A$ and $x\in \Im(\A)$.
\end{proof}
\begin{cor}\label{BA}
Let $\A$ be a Banach algebra, let $\X$ be a Banach space and let
$\PH$ be a continuous bilinear map satisfying $(G)$. Then
\[ \phi(a,x)+\phi(x,a)=\phi(ax,1)+\phi(1,xa)\quad and \quad \phi(x,1)=\phi(1,x) \]
for all $a\in \A$ and $x\in \overline{\Im(\A)}$. Indeed, if
$\A=\overline{\Im(\A)}$, then
\[ \phi(a,b)+\phi(b,a)=\phi(ab,1)+\phi(1,ba)\quad and \quad \phi(a,1)=\phi(1,a) \]
for all $a,b\in \A$.
\end{cor}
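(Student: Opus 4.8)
The plan is to adapt the proof of Theorem~\ref{ds} verbatim, replacing $\Im(\A)$ by $\overline{\Im(\A)}$ and exploiting the continuity of $\phi$ to pass from idempotents (and their finite linear combinations) to their norm-closure. The reason this works is structural: Theorem~\ref{ds} establishes the two identities
\[
\phi(a,x)+\phi(x,a)=\phi(ax,1)+\phi(1,xa)
\quad\textrm{and}\quad
\phi(x,1)=\phi(1,x)
\]
for every $a\in\A$ and every $x$ in the \emph{algebraic} span $\Im(\A)$ of the idempotents. Both sides of each identity are, for fixed $a$, continuous functions of $x$ (since $\phi$ is continuous and multiplication is continuous in a Banach algebra). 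Thus the set of $x$ for which a given identity holds is norm-closed, and since it contains $\Im(\A)$ it must contain $\overline{\Im(\A)}$.

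First I would invoke Theorem~\ref{ds} to obtain the two displayed identities for all $a\in\A$ and all $x\in\Im(\A)$. Second, I would fix $a\in\A$ and consider the maps $x\mapsto \phi(a,x)+\phi(x,a)$ and $x\mapsto \phi(ax,1)+\phi(1,xa)$; both are continuous in $x$, so they agree on the closure of the set where they already agree, namely on $\overline{\Im(\A)}$. The same continuity argument applied to $x\mapsto\phi(x,1)$ and $x\mapsto\phi(1,x)$ yields $\phi(x,1)=\phi(1,x)$ for all $x\in\overline{\Im(\A)}$. This establishes the first pair of conclusions. Finally, under the hypothesis $\A=\overline{\Im(\A)}$, every element $b\in\A$ lies in $\overline{\Im(\A)}$, so taking $x=b$ in the two identities gives exactly
\[
\phi(a,b)+\phi(b,a)=\phi(ab,1)+\phi(1,ba)
\quad\textrm{and}\quad
\phi(a,1)=\phi(1,a)
\]
for all $a,b\in\A$, as required.

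I expect no genuine obstacle here, since this corollary is precisely the Banach-space analogue of Theorem~\ref{ds} and its predecessors (compare Proposition~\ref{bzp} with Theorem~\ref{zp}, and Proposition~\ref{BJzp} with Theorem~\ref{Jzp}) follow the identical pattern. The only point requiring a word of care is the continuity of the right-hand sides: one must note that $x\mapsto ax$ and $x\mapsto xa$ are continuous linear maps on the Banach algebra $\A$, so that $x\mapsto\phi(ax,1)$ and $x\mapsto\phi(1,xa)$ are continuous as compositions of continuous maps. Given this, the limiting argument is routine, and the proof can be compressed to a single sentence citing the proof of Theorem~\ref{ds} together with the continuity of $\phi$.
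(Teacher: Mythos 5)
Your proposal is correct and is exactly the argument the paper intends: the corollary is stated without a printed proof precisely because it follows from Theorem~\ref{ds} by the continuity extension you describe (the same pattern the paper uses explicitly in Proposition~\ref{bzp} and Proposition~\ref{BJzp}). The key observation — that for fixed $a$ both sides of each identity are continuous in $x$, so the set where they agree is norm-closed and hence contains $\overline{\Im(\A)}$ — is the right one, and the specialization $x=b$ under $\A=\overline{\Im(\A)}$ finishes it just as you say.
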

Recall that a bilinear map $\PH$, where $\X$ is a linear space,
is called \emph{symmetric} if $\phi(a,b)=\phi(b,a)$ holds for all
$a,b\in \A$.
\par
\begin{prop}\label{n}
Let $\A=\Im(\A)$ ($\A=\overline{\Im(\A)}$), let $\X$ be a linear
(Banach) space and let $\PH$ be a (continuous) bilinear map. The
following conditions are equivalent:
\begin{enumerate}
\item[(i)] $\phi$ is a symmetric bilinear map satisfying the condition
\[ a,b\in \A, \quad ab=ba=0 \Rightarrow \phi(a,b)=0;\]
\item[(ii)] $\phi$ satisfies
\[ a,b\in \A, \quad a\circ b=0 \Rightarrow \phi(a,b)=0;\]
\item[(iii)] there exists a (continuous) linear map $T: \A \rightarrow \X$ such that $\phi(a,b)=T(a\circ
b)$ for all $a,b\in \A$.
\end{enumerate}
\end{prop}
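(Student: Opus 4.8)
I would prove the equivalences in the cyclic order $(iii)\Rightarrow(ii)\Rightarrow(i)\Rightarrow(iii)$, since the first two implications are essentially immediate and the last is where the real content sits. For $(iii)\Rightarrow(ii)$: if $\phi(a,b)=T(a\circ b)$ for a (continuous) linear $T$, then $a\circ b=0$ forces $\phi(a,b)=T(0)=0$, so $(ii)$ holds. For $(ii)\Rightarrow(i)$: assume $(ii)$. Symmetry follows from the observation that $a\circ b = b\circ a$, so the hypothesis $a\circ b=0$ is symmetric in $a,b$; more carefully, I would apply Theorem~\ref{Jzp} (or Proposition~\ref{BJzp} in the Banach case), which under $\A=\Im(\A)$ gives $\phi(a,b)=\tfrac12\phi(ab,1)+\tfrac12\phi(ba,1)$ for all $a,b\in\A$, and the right-hand side is manifestly symmetric in $a$ and $b$. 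Then the condition $ab=ba=0\Rightarrow\phi(a,b)=0$ is a special case of $(ii)$, because $ab=ba=0$ implies $a\circ b=ab+ba=0$.

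\emph{The main step is $(i)\Rightarrow(iii)$.} Here I would start from the hypothesis that $\phi$ is symmetric and satisfies condition $(G)$ (which is exactly $ab=ba=0\Rightarrow\phi(a,b)=0$), so Theorem~\ref{ds} (respectively Corollary~\ref{BA}) applies and yields, for all $a,b\in\A$ under $\A=\Im(\A)$,
\[ \phi(a,b)+\phi(b,a)=\phi(ab,1)+\phi(1,ba) \quad\text{and}\quad \phi(a,1)=\phi(1,a). \]
Now I would invoke symmetry: $\phi(a,b)=\phi(b,a)$, so the left-hand side equals $2\phi(a,b)$. Combining this with $\phi(1,ba)=\phi(ba,1)$ (the second identity applied with the argument $ba$), the right-hand side becomes $\phi(ab,1)+\phi(ba,1)$. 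This gives
\[ 2\phi(a,b)=\phi(ab,1)+\phi(ba,1), \]
so that $\phi(a,b)=\tfrac12\phi(ab,1)+\tfrac12\phi(ba,1)=\tfrac12\phi(ab+ba,1)=\tfrac12\phi(a\circ b,1)$.

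The natural candidate is therefore $T(a)=\tfrac12\phi(a,1)$, which is linear (and continuous in the Banach case, since $\phi$ is continuous and $x\mapsto\phi(x,1)$ is then a continuous linear map). With this definition, the displayed identity reads precisely $\phi(a,b)=T(a\circ b)$ for all $a,b\in\A$, establishing $(iii)$. The anticipated obstacle is purely bookkeeping: one must be careful to use \emph{both} conclusions of Theorem~\ref{ds}, as the passage from $\phi(1,ba)$ to $\phi(ba,1)$ is exactly where the second identity $\phi(x,1)=\phi(1,x)$ is needed, and without symmetry the coefficient $\tfrac12$ would not collapse the two-sided expression into a function of $a\circ b$ alone. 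The Banach case is handled identically, replacing $\Im(\A)$ by $\overline{\Im(\A)}$ and citing Corollary~\ref{BA} and Proposition~\ref{BJzp} in place of their algebraic counterparts.
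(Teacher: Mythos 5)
Your proposal is correct and follows essentially the same route as the paper: the implication from symmetry plus condition $(G)$ to $(iii)$ is obtained exactly as in the paper, by combining Theorem~\ref{ds} (Corollary~\ref{BA}) with symmetry to get $\phi(a,b)=\tfrac12\phi(a\circ b,1)$ and defining $T(a)=\tfrac12\phi(a,1)$, while the remaining implications rest on Theorem~\ref{Jzp} (Proposition~\ref{BJzp}) just as in the paper. Your only deviations are cosmetic --- you arrange the implications cyclically and you make explicit the use of the identity $\phi(x,1)=\phi(1,x)$ to pass from $\phi(1,ba)$ to $\phi(ba,1)$, a step the paper leaves implicit.
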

\begin{proof}
$(iii)\Rightarrow (i)$ and $(iii)\Rightarrow (ii)$ are clear.
$(ii)\Rightarrow (iii)$ obtains from Theorem~\ref{Jzp}
(Proposition~\ref{BJzp}). We show that $(i)\Rightarrow (ii)$
holds.
\par
By Theorem~\ref{ds} (Corollary~\ref{BA}), we have
\[ \phi(a,b)+\phi(b,a)=\phi(ab,1)+\phi(1,ba) \]
for all $a,b\in \A$. So $\phi(a,b)=\frac{1}{2}\phi(ab+ba,1)$,
since $\phi$ is symmetric. If we define the linear mapping $T:\A
\rightarrow \X$ by $T(a)=\frac{1}{2}\phi(a,1)$, then
$\phi(a,b)=T(a\circ b)$ for all $a,b\in \A$ (It is obvious if
$\phi$ is continuous, then $T$ is continuous).
\end{proof}
\section{Characterizing derivations and Jordan derivations through zero products}
In this section for $\M$ bimodule over $\A$, and $D:\A \rightarrow
\M$ a linear map, we will consider the following conditions:
\begin{enumerate}
\item[(d1)] $ab=0 \Rightarrow aD(b)+D(a)b=0.$
\item[(d2)] $ab=ba=0 \Rightarrow aD(b)+D(a)b=0.$
\item[(d3)] $a\circ b=0 \Rightarrow a\bullet D(b)+D(a)\bullet b=0.$
\item[(d4)] $ab=ba=0 \Rightarrow a\bullet D(b)+D(a)\bullet b=0.$
\end{enumerate}
\begin{thm}\label{d1}
Let $\A$ be an (Banach) algebra, $\M$ be an (Banach) $\A$-bimodule
and $\mathcal{J}$ be an ideal of $\A$ such that
$\mathcal{J}\subseteq \Im(\A)$ ($\mathcal{J}\subseteq
\overline{\Im(\A)}$) and
\[ \{ m\in \M \, | \, xm=mx=0  \, \, for
\, all \,\, x\in \mathcal{J}\}=\{0\} . \] Assume that $D:\A
\rightarrow \M$ is a (continuous) linear map satisfying
\emph{(d1)}. Then $D$ is a generalized derivation and
$aD(1)=D(1)a$ for all $a\in \A$.
\end{thm}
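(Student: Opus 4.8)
The plan is to encode condition (d1) into a bilinear map and feed it into Theorem~\ref{zp} (respectively Proposition~\ref{bzp} in the Banach case). Define $\phi:\A\times\A\to\M$ by $\phi(a,b)=aD(b)+D(a)b$. Since $D$ is linear and the bimodule operations are bilinear, $\phi$ is bilinear (and continuous whenever $D$ is, as $\M$ is a Banach bimodule); moreover (d1) says precisely that $ab=0$ forces $\phi(a,b)=0$. Hence Theorem~\ref{zp} applies and yields, for every $a\in\A$ and every $x\in\Im(\A)$ (in the Banach case $x\in\overline{\Im(\A)}$, via Proposition~\ref{bzp}), the two identities $\phi(a,x)=\phi(ax,1)$ and $\phi(x,a)=\phi(1,xa)$. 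Written out, these read
\[ aD(x)+D(a)x=axD(1)+D(ax) \quad\text{and}\quad xD(a)+D(x)a=D(1)xa+D(xa). \]
Because $\mathcal{J}\subseteq\Im(\A)$ ($\subseteq\overline{\Im(\A)}$), both hold for all $x\in\mathcal{J}$.

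Next I would prove that $D(1)$ is central. Putting $a=1$ in the first identity collapses it to $D(1)x=xD(1)$ for every $x\in\mathcal{J}$. For an arbitrary $a\in\A$ set $c(a)=aD(1)-D(1)a$. Using that $\mathcal{J}$ is an ideal (so $xa,ax\in\mathcal{J}$) together with the commutation just obtained, a short computation gives $xc(a)=0$ and $c(a)x=0$ for all $x\in\mathcal{J}$; the separation hypothesis $\{m\in\M\mid xm=mx=0 \text{ for all } x\in\mathcal{J}\}=\{0\}$ then forces $c(a)=0$, i.e.\ $aD(1)=D(1)a$ for all $a\in\A$. This step is straightforward.

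The substantive part is to upgrade the generalized-derivation identity from pairs with one factor in $\mathcal{J}$ to arbitrary pairs. With $D(1)$ now central, the two displayed identities say that $F(a,x)=-aD(1)x$ and $F(x,a)=-xD(1)a$ for $x\in\mathcal{J}$, where $F(a,b)=D(ab)-D(a)b-aD(b)$. For any linear $D$ the map $F$ obeys the Hochschild-type identity $aF(b,c)-F(ab,c)+F(a,bc)-F(a,b)c=0$ for all $a,b,c\in\A$. I would substitute $c=x\in\mathcal{J}$ (using $bx,abx\in\mathcal{J}$) to obtain $\delta(a,b)x=0$, and symmetrically substitute $a=x\in\mathcal{J}$ (using $xb\in\mathcal{J}$) to obtain $x\delta(a,b)=0$, where $\delta(a,b)=D(ab)-D(a)b-aD(b)+aD(1)b$ is the generalized-derivation defect. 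One more application of the separation hypothesis yields $\delta(a,b)=0$ for all $a,b\in\A$, which is exactly the assertion that $D$ is a generalized derivation.

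The main obstacle is this last propagation step: the identities coming from Theorem~\ref{zp} only constrain $\phi$ (hence $F$) when one argument lies in $\Im(\A)$, and a priori $\Im(\A)$ need be neither an ideal nor all of $\A$. The device that overcomes it is to work through the ideal $\mathcal{J}\subseteq\Im(\A)$ --- whose ideal property keeps the auxiliary products inside $\mathcal{J}$ when the cocycle identity is expanded --- and then to annihilate the defect on both sides by $\mathcal{J}$ and invoke the separation condition. In the Banach case nothing changes except that Proposition~\ref{bzp} supplies the two identities on $\overline{\Im(\A)}\supseteq\mathcal{J}$, and the resulting map is continuous.
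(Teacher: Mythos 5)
Your proof is correct and follows essentially the same route as the paper: the same bilinear map $\phi(a,b)=aD(b)+D(a)b$ fed into Theorem~\ref{zp} (Proposition~\ref{bzp} in the Banach case), the same argument forcing $aD(1)=D(1)a$ via the ideal $\mathcal{J}$ and the separation hypothesis, and the same final step of annihilating the generalized-derivation defect by $\mathcal{J}$ on both sides. Your Hochschild cocycle identity $aF(b,c)-F(ab,c)+F(a,bc)-F(a,b)c=0$, specialized at $c=x\in\mathcal{J}$ (respectively $a=x\in\mathcal{J}$), is exactly a repackaging of the paper's comparison of the two expansions of $D(abx)$ (respectively the left-handed analogue), so the propagation step is the same computation in different notation.
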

\begin{proof}
Define a bilinear map $\phi:\A \times \A \rightarrow \M$ by
$\phi(a,b)=aD(b)+D(a)b$. Then $\phi(a,b)=0$ for all $a,b\in \A$
with $ab=0$. By applying Theorem~\ref{zp}, we obtain
$\phi(a,x)=\phi(ax,1)$ for all $a\in \A$ and $x\in \Im(\A)$. So
\begin{equation}\label{e1}
 aD(x)+D(a)x=axD(1)+D(ax),
\end{equation}
for all $a\in \A$ and $x\in \Im(\A)$. Letting $a=1$ in
\eqref{e1}, we arrive at $D(1)x=xD(1)$, for all $x\in
\mathcal{J}$. So we have $aD(1)x=axD(1)=D(1)ax$ and
$xaD(1)=D(1)xa=xD(1)a$, for all $a\in \A$ and $x\in \mathcal{J}$.
Hence $(aD(1)-D(1)a)\mathcal{J}=\mathcal{J}(aD(1)-D(1)a)=\{0\}$,
for each $a\in \A$. From hypothesis it follows that
\begin{equation}\label{e2}
 D(1)a=aD(1),
\end{equation}
for all $a \in \A$.
\par
Let $a,b \in \A$ and $x\in \Im(\A)$. By applying \eqref{e1} and
\eqref{e2}, we obtain
\begin{equation*}
D(abx)=abD(x)+D(ab)x-aD(1)bx,
\end{equation*}
and on the other hand
\begin{equation*}
\begin{split}
D(abx)& =aD(bx)+D(a)bx-abxD(1)\\ &= abD(x)+aD(b)x+D(a)bx-2aD(1)bx.
\end{split}
\end{equation*}
By comparing the two expressions for $D(abx)$, we arrive at
\begin{equation}\label{e3}
(D(ab)-aD(b)-D(a)b+aD(1)b)x=0
\end{equation}
for all $a,b\in \A$ and $x\in \Im(\A)$. By Theorem~\ref{zp}, we
have $\phi(x,a)=\phi(1,xa)$ for all $a\in \A$ and $x\in \Im(\A)$.
Now by this identity and using similar arguments as above it
follows that
\begin{equation}\label{e4}
x(D(ab)-aD(b)-D(a)b+aD(1)b)=0
\end{equation}
for all $a,b \in \A$ and $x\in \Im(\A)$. Hence from \eqref{e3} and
\eqref{e4}, we find that
$(D(ab)-aD(b)-D(a)b+aD(1)b)\mathcal{J}=\mathcal{J}(D(ab)-aD(b)-D(a)b+aD(1)b)=\{0\}$,
for each $a,b \in \A$. From hypothesis it follows that
\[ D(ab)=aD(b)+D(a)b-aD(1)b,\]
for all $a,b\in \A$.
\par
By Proposition~\ref{bzp} and using similar arguments as that in
the above proof , we get the result in case of Banach algebras.
\end{proof}
In order to prove next theorem we will adopt the following
notational convention
\[ [a,m,b]=amb+bma \quad and \quad [a,b,m]=[m,b,a]=abm+mba\]
for all $a,b\in \A$ and $m\in \M$, where $\A$ is an algebra and
$\M$ is an $\A$-bimodule. Also we need the following lemma, the
proof of which is routine and will be omitted.
\begin{lem}\label{f}
Let $\A$ be an algebra and $\M$ be an $\A$-bimodule. For all
$a,b,c \in \A$ and $m\in \M$ we have
\begin{enumerate}
\item[(i)]
\[ 2[a,m,b]=a\bullet (b\bullet m)+b\bullet (a\bullet m)-(a\circ b)\bullet m
\]
and
\[ 2[a,b,m]=a\bullet (b\bullet m)+(a\circ b)\bullet m-b\bullet (a\bullet
m);
\]

\item[(ii)]
\[ [m,a\circ b,c]=[b\bullet m,a,c]+[m,a,b\circ c]-[m,a,c]\bullet b\]
and
\begin{equation*}
\begin{split}
[a, b\bullet m, c]&=[a\bullet m,b,c]+[a,b,c\bullet
m]-[a,b,c]\bullet m
\\&=[a\circ b,m,c]+[a,m,b\circ c]-[a,m,c]\bullet b.
\end{split}
\end{equation*}
\end{enumerate}
\end{lem}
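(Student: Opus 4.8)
The plan is to reduce everything to associativity. The first thing I would observe is that, under both of the stated conventions, each bracket has the uniform shape $[x,y,z]=xyz+zyx$ in which exactly one of the three slots carries the module element, while $a\bullet m$ and $a\circ b$ each expand into a symmetric pair of products. Consequently every term appearing in (i) and (ii) is, after expansion, a sum of \emph{words}: associative monomials in the letters $a,b,c$ containing precisely one occurrence of $m$. The cleanest way to organize this is to pass to the split null (trivial) extension, the associative algebra on $\A\oplus\M$ with $\M\cdot\M=\{0\}$; there $a\bullet m$ is literally the Jordan product and $[x,y,z]$ is literally the associative Jordan triple product $xyz+zyx$, so each claimed identity becomes a relation among (double and triple) Jordan products in an \emph{associative} algebra, hence a formal consequence of associativity. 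I would therefore verify each identity by expanding both sides into words and matching coefficients; no module axioms beyond those encoded in the associativity of $\A\oplus\M$ are needed, and no $\M\cdot\M$ terms ever arise because each expression contains a single module factor.

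For part (i) I would expand the right-hand side of the first identity directly: $a\bullet(b\bullet m)$ contributes $abm+amb+bma+mba$, $b\bullet(a\bullet m)$ contributes $bam+bma+amb+mab$, and $(a\circ b)\bullet m$ contributes $abm+bam+mab+mba$. In the combination $a\bullet(b\bullet m)+b\bullet(a\bullet m)-(a\circ b)\bullet m$ all remaining words cancel and only $2amb+2bma=2[a,m,b]$ survives. The second identity of (i) is the same tally with one sign pattern permuted, this time leaving $2abm+2mba=2[a,b,m]$. These are nothing but the standard linearizations expressing the associative Jordan triple product through Jordan products, so they could equally be quoted rather than recomputed.

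For part (ii) I would first expand the middle bracket $[a,b\bullet m,c]=a(b\bullet m)c+c(b\bullet m)a$ into $abmc+ambc+cbma+cmba$, and then check that each of the two displayed expressions reproduces exactly these four words. In the first expression the correction term $-[a,b,c]\bullet m$ is precisely what annihilates the surplus words (those in which $a,b,c$ stay together and $m$ is pushed to an end) produced by $[a\bullet m,b,c]$ and $[a,b,c\bullet m]$; in the second expression the corrector $-[a,m,c]\bullet b$ plays the same role for the words coming from $[a\circ b,m,c]$ and $[a,m,b\circ c]$. The first line of (ii) can alternatively be deduced by substituting the two formulas of part (i), but the direct word-count is shorter.

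I do not expect a genuine conceptual obstacle here: once the uniform reading $[x,y,z]=xyz+zyx$ is in place, the content of every identity is simply that the correcting terms of the form $-[\,\cdot\,,\,\cdot\,,\,\cdot\,]\bullet(\cdot)$ cancel exactly those words that do not match the target bracket. The only real danger is a sign slip or a mis-transcribed word during the cancellation, so in practice I would fix a single left-to-right ordering of the words and tally coefficients column by column; invoking the $\A\oplus\M$ embedding removes even this risk by turning each line into an instance of a known associative Jordan-triple identity.
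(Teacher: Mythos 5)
Your proposal is correct, and it supplies exactly what the paper leaves out: the paper states that the proof of Lemma~\ref{f} ``is routine and will be omitted,'' and your direct expansion of every bracket into associative words with one occurrence of $m$ (optionally packaged via the split null extension $\A\oplus\M$) is precisely that routine verification. I checked your word tallies for both identities in (i) and all three expressions in (ii), and they are accurate.
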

\begin{thm}\label{d2}
Let $\A$ be an (Banach) algebra, $\M$ be an (Banach) $\A$-bimodule
satisfying $\mathbb{M}$ ($\mathbb{M}^{\prime}$). Suppose that
$D:\A \rightarrow \M$ is a (continuous) linear map. Then the
following conditions are equivalent:
\begin{enumerate}
\item[(i)] $D$ is a generalized Jordan
derivation and $aD(1)=D(1)a$ for all $a\in \A$;
\item[(ii)] $D$ satisfies \emph{(d3)};
\item[(iii)] $D$ satisfies \emph{(d4)}.
\end{enumerate}
\end{thm}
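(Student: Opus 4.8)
The plan is to establish the cycle (i) $\Rightarrow$ (ii) $\Rightarrow$ (iii) $\Rightarrow$ (i), where the first two implications are short and the third carries all the content. For (i) $\Rightarrow$ (ii), assume $D$ is a generalized Jordan derivation with $D(1)$ central and suppose $a\circ b=0$. Then $0=D(a\circ b)=a\bullet D(b)+D(a)\bullet b-aD(1)b-bD(1)a$, and centrality of $D(1)$ turns $aD(1)b+bD(1)a$ into $D(1)(ab+ba)=D(1)(a\circ b)=0$; hence $a\bullet D(b)+D(a)\bullet b=0$, which is exactly (d3). The implication (ii) $\Rightarrow$ (iii) is immediate, since $ab=ba=0$ forces $a\circ b=0$, so the conclusion of (d3) holds in particular whenever the hypothesis of (d4) does.

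The substance is (iii) $\Rightarrow$ (i). First I would define $\phi:\A\times\A\rightarrow\M$ by $\phi(a,b)=a\bullet D(b)+D(a)\bullet b$. Since $\bullet$ is commutative, $\phi$ is symmetric, and (d4) says precisely that $\phi$ satisfies $(G)$. Applying Theorem~\ref{ds} (resp. Corollary~\ref{BA} in the Banach case) and using symmetry to merge the two resulting relations $\phi(a,x)+\phi(x,a)=\phi(ax,1)+\phi(1,xa)$ and $\phi(x,1)=\phi(1,x)$, I obtain, for every $a\in\A$ and every $x\in\Im(\A)$ (resp. $x\in\overline{\Im(\A)}$), the key identity $D(a\circ x)=a\bullet D(x)+D(a)\bullet x-\frac{1}{2}(a\circ x)\bullet D(1)$. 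By symmetry of $\phi$ this also holds with the $\Im(\A)$-element in the first slot, so I may peel off an $\Im(\A)$-factor from either side of a Jordan product.

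The core step is to upgrade this identity from $x\in\Im(\A)$ (in particular from $x\in\mathcal{J}$) to an arbitrary second argument, and at the same time to extract $aD(1)=D(1)a$. The strategy mirrors Theorem~\ref{d1}: evaluate $D$ of a nested Jordan product built from $a,b$ and a factor $x\in\mathcal{J}\subseteq\Im(\A)$ in two different ways and compare. After peeling with the identity above, the result involves iterated Jordan products $a\bullet(x\bullet m)$ and sandwich terms with $m=D(1)$ or $m=D(\,\cdot\,)$; here Lemma~\ref{f}(i) rewrites those iterated products as module triple products plus $(a\circ x)\bullet m$ terms, while Lemma~\ref{f}(ii) supplies the reassociation needed to match the two expansions. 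Comparing them, the genuinely $D$-dependent terms cancel and one is left with relations showing that both the commutator $aD(1)-D(1)a$ and the quantity $D(a\circ b)-a\bullet D(b)-D(a)\bullet b+\frac{1}{2}(a\circ b)\bullet D(1)$ are annihilated by $\mathcal{J}$ on both sides. Property $\mathbb{M}$ (through its stated consequence that $\{m\in\M\,|\,xm=mx=0 \text{ for all } x\in\mathcal{J}\}=\{0\}$) then forces $aD(1)=D(1)a$ and makes the second quantity vanish; since $D(1)$ is now central, $\frac{1}{2}(a\circ b)\bullet D(1)=aD(1)b+bD(1)a$, so this is exactly the generalized Jordan derivation identity, and (i) follows. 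The Banach case runs verbatim with $\overline{\Im(\A)}$ replacing $\Im(\A)$, using continuity of $\phi$.

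The main obstacle I anticipate is the bookkeeping in this last step. Because the Jordan product is non-associative, the two expansions of the nested product do not align termwise, and Lemma~\ref{f} must be applied repeatedly and in a carefully chosen order to reconcile them. Assembling the $D(1)$-contributions into the clean central/commutator form that $\mathbb{M}$ can annihilate is the delicate point: in contrast to the ordinary-product situation of Theorem~\ref{d1}, the commutativity $aD(1)=D(1)a$ is invisible at the level of the first identity (setting $a=1$ there is vacuous by symmetry of $\phi$) and surfaces only after the second-order comparison.
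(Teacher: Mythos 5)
Your two easy implications and your first-order step are correct: defining $\phi(a,b)=a\bullet D(b)+D(a)\bullet b$, observing it is symmetric and satisfies $(G)$, and extracting from Theorem~\ref{ds} the identity $D(a\circ x)=a\bullet D(x)+D(a)\bullet x-\frac{1}{2}(a\circ x)\bullet D(1)$ for $x\in\Im(\A)$ is a valid computation, and it is the analogue of the paper's identity \eqref{e5}. The gap is in what you yourself call the core step, which you only describe and never carry out. You assert that a two-way expansion of a nested Jordan product will make ``the genuinely $D$-dependent terms cancel'' and leave both the commutator $aD(1)-D(1)a$ and the generalized-Jordan-derivation defect annihilated by $\mathcal{J}$; that assertion \emph{is} the theorem, and nothing in your outline supports it. Because you never remove $D(1)$ from the picture, every application of your key identity inside the expansions of $D([x,a,y])$ and $D([x,a^2,y])$ deviates from a genuine derivation law by an error term of the form $aD(1)x+xD(1)a-\frac{1}{2}(a\circ x)\bullet D(1)$ (this is what Lemma~\ref{f}(i) yields when you compare your identity with the $\Delta$-version below), and these errors proliferate through the second-order computation; you give no argument that their sum collapses to the clean commutator statement you want. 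A further sign that the computation was not actually done: Jordan-type expansions naturally produce sandwich conditions $x(\,\cdot\,)x=0$, which need the full property $\mathbb{M}$ of \eqref{e}, not the weaker two-sided consequence $\{m\,|\,xm=mx=0\ \text{for all}\ x\in\mathcal{J}\}=\{0\}$ that you invoke.

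The paper avoids all of this by proving $aD(1)=D(1)a$ \emph{first}, by a short direct argument that your plan misses: apply (d4) to the pair $(p,1-p)$ for an idempotent $p$, obtaining $2D(p)+pD(1)+D(1)p=2pD(p)+2D(p)p$; multiplying on the left and on the right by $p$ separately gives $pD(1)+pD(1)p=2pD(p)p=pD(1)p+D(1)p$, hence $pD(1)=D(1)p$, so $xD(1)=D(1)x$ on $\mathcal{J}$, and then $x(aD(1)-D(1)a)x=0$ for all $x\in\mathcal{J}$, so $\mathbb{M}$ forces centrality. With centrality in hand one sets $\Delta(a)=D(a)-aD(1)$, which still satisfies (d4) but now has $\Delta(1)=0$, so the bilinear-map identity becomes the clean $\Delta(a\circ x)=a\bullet\Delta(x)+\Delta(a)\bullet x$ with no $D(1)$ terms at all; the nested-product comparison (the paper's Claims 1 and 2, via Lemma~\ref{f}) then closes with $x(\Delta(a^2)-a\bullet\Delta(a))x=0$ and property $\mathbb{M}$. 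Your architecture --- Theorem~\ref{ds}, Lemma~\ref{f}, nested products, property $\mathbb{M}$ --- is the right one, but until you either prove centrality separately (as the paper does) or genuinely push the $D(1)$-error terms through the comparison, your proof is incomplete at exactly its decisive point.
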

\begin{proof}
Clearly (i) implies (ii) and (ii) implies (iii). We show that
(iii) implies (i).
\par
Let $\mathcal{J}$ be an ideal of $\A$ such that
$\mathcal{J}\subseteq \Im(\A)$ (if $\A$ is a Banach algebra we
assume that $\mathcal{J}\subseteq \overline{\Im(\A)}$) and
\[\{m\in \M \, | \, xmx=0 \, \, for \, all \,\, x\in
\mathcal{J}\}=\{0\}.\]
Let $p$ be a idempotent of $\A$. As
$p(1-p)=(1-p)p=0$ it follows that
\[ 2D(p)+pD(1)+D(1)p=2pD(p)+2D(p)p.\]
By multiplying this identity on the left and right by $p$,
respectively, we arrive at
\begin{equation*}
\begin{split}
&  pD(1)p+D(1)p=2pD(p)p,\\& pD(1)+pD(1)p=2pD(p)p,
\end{split}
\end{equation*}
which implies $pD(1)=D(1)p$. By linearity, it shows $xD(1)=D(1)x$
for all $x\in \mathcal{J}$. Hence $aD(1)x=D(1)ax$ and
$xD(1)a=xaD(1)$ for each $a \in \A$ and $x\in \mathcal{J}$.
Therefore $x(aD(1)-D(1)a)x=0$ for all $x\in \mathcal{J}$ and by
hypothesis we have
\[ aD(1)=D(1)a \]
for all $a \in \A$.
\par
Define $\Delta:\A\rightarrow \M$ by $\Delta(a)=D(a)-aD(1)$. Then
$\Delta$ is a linear map which satisfies (d4) and $\Delta(1)=0$.
We will show that $\Delta$ is a Jordan derivation. So $D$ is a
generalized Jordan derivation.
\par
Now define a bilinear map $\phi:\A \times \A \rightarrow \M$ by
$\phi(a,b)=a\bullet \Delta(b)+\Delta(a) \bullet b$. So
$\phi(a,b)=0$ for all $a,b\in \A$ with $ab=ba=0$, and by
Theorem~\ref{ds}, we get
$\phi(a,x)+\phi(x,a)=\phi(ax,1)+\phi(1,xa)$ for all $a\in \A$ and
$x\in \mathcal{J}$. Hence
\begin{equation}\label{e5}
\Delta(a\circ x)=a\bullet \Delta(x)+\Delta(a)\bullet x
\end{equation}
for all $a \in \A$ and $x\in \mathcal{J}$.
\\ \\
\textbf{Claim1.} For all $a \in \A$ and $x,y \in \mathcal{J}$, we
have
\[\Delta([x,a,y])=[\Delta(x),a,y]+[x,\Delta(a),y]+[x,a,\Delta(y)]
\]
\\
\textbf{Reason.} Let $x,y \in \mathcal{J}$ and $a\in \A$. From
Lemma~\ref{f} and \eqref{e5}, we obtain
\begin{equation*}
\begin{split}
2\Delta([x,a,y])&=\Delta(x\circ(a\circ y))+\Delta(y\circ (a\circ
x))-\Delta((x\circ y)\circ a)\\&= x\bullet \Delta(a\circ
y)+\Delta(x)\bullet(a\circ y)+y\bullet \Delta(a\circ
x)\\&+\Delta(y)\bullet (a\circ x)-(x\circ y)\bullet
\Delta(a)-\Delta(x\circ y)\bullet a\\&= x\bullet (y \bullet
\Delta(a))+x\bullet (\Delta(y)\bullet a)+\Delta(x)\bullet (y\circ
a)\\&+y\bullet(\Delta(x)\bullet a)+y\bullet (\Delta(a)\bullet
x)+\Delta(y)\bullet (x \circ a)\\&- (x \circ y)\bullet \Delta
(a)-(x\bullet \Delta(y))\bullet a-(\Delta(x)\bullet y)\bullet
a\\&= 2[\Delta(x),a,y]+2[x,\Delta(a),y]+2[x,a,\Delta(y)].
\end{split}
\end{equation*}
\\
\textbf{Claim2.} For all $a \in \A$ and $x,y \in \mathcal{J}$, we
have
\[\Delta([x,a^{2},y])=[\Delta(x),a^{2},y]+[x,a\bullet \Delta(a),y]+[x,a^{2},\Delta(y)]
\]
\\
\textbf{Reason.} Let $x,y \in \mathcal{J}$ and $a\in \A$. From
this Lemma~\ref{f}, Claim 1 and \eqref{e5}, it follows that
\begin{equation*}
\begin{split}
2\Delta([x,a^{2},y])&=\Delta([x,a\circ a,y])\\&= \Delta([x\circ
a,a,y])+\Delta ([x,a,y\circ a])-\Delta([x,a,y]\circ
a)\\&=[\Delta(x\circ a),a,y]+[x\circ a,\Delta(a),y]+[x\circ
a,a,\Delta(y)]\\&+ [\Delta(x),a,y\circ a]+[x,\Delta(a),y\circ
a]+[x,a,\Delta(y\circ a)]\\&-a\bullet
\Delta([x,a,y])-\Delta(a)\bullet [x,a,y].
\end{split}
\end{equation*}
So
\begin{equation*}
\begin{split}
2\Delta([x,a^{2},y])&= [a\bullet
\Delta(x),a,y]+[\Delta(x),a,y\circ a]-[\Delta(x),a,y]\bullet
a\\&+[\Delta(a)\bullet x,a,y]+[x,a,y\bullet
\Delta(a)]-[x,a,y]\bullet \Delta(a)\\&+[x,a,a\bullet
\Delta(y)]+[a\circ x,a,\Delta(y)]-[x,a,\Delta(y)]\bullet
a\\&+[x\circ a,\Delta(a),y]+[x,\Delta(a),y\circ
a]-[x,\Delta(a),y]\bullet a\\&=2[\Delta(x),a^{2},y]+2[x,a\bullet
\Delta(a),y]+2[x,a^{2},\Delta(y)].
\end{split}
\end{equation*}
\par
Now by applying Claim 1, we have
\[\Delta([x,a^{2},x])=[\Delta(x),a^{2},x]+[x,\Delta(a^{2}),x]+[x,a^{2},\Delta(x)]
\]
for all $a \in \A$ and $x \in \mathcal{J}$. On the other hand
from Claim 2, we see that
\[\Delta([x,a^{2},x])=[\Delta(x),a^{2},x]+[x,a\bullet \Delta(a),x]+[x,a^{2},\Delta(x)]
\]
for all $a \in \A$ and $x \in \mathcal{J}$. By comparing the two
expressions for $\Delta([x,a^{2},x])$, we arrive at
\[ x(\Delta(a^{2})-a\bullet \Delta(a))x=0\]
for all $a \in \A$ and $x \in \mathcal{J}$. Therefore by
hypothesis we have $\Delta(a^{2})=a\bullet \Delta(a)$ for each
$a\in \A$ and so $\Delta$ is a Jordan derivation.
\par
Similarly, by Corollary~\ref{BA} we have the result in case of
Banach algebras and continuous linear maps.
\end{proof}\label{Bd2}
\begin{thm}\label{dd2}
Let $\A$ be an (Banach) algebra, $\M$ be an (Banach) $\A$-bimodule
satisfying $\mathbb{M}$ ($\mathbb{M}^{\prime}$). Suppose that
$D:\A \rightarrow \M$ is a (continuous) linear map satisfying
\emph{(d2)}. Then $D$ is a generalized Jordan derivation and
$aD(1)=D(1)a$ for all $a\in \A$.
\end{thm}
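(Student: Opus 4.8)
The plan is to reduce condition (d2) to condition (d4) and then invoke Theorem~\ref{d2}, which already handles bimodules satisfying $\mathbb{M}$ ($\mathbb{M}^{\prime}$). The key observation is that the hypothesis $ab=ba=0$ defining (d2) is symmetric under interchanging $a$ and $b$, so a single instance of (d2) actually yields two relations for each such pair, and these combine to give exactly the Jordan form (d4).

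First I would fix $a,b\in\A$ with $ab=ba=0$. Applying (d2) to the pair $(a,b)$ gives $aD(b)+D(a)b=0$. Since $ba=ab=0$ holds as well, applying (d2) to the pair $(b,a)$ gives $bD(a)+D(b)a=0$. Adding these two identities produces
\[ aD(b)+D(b)a+D(a)b+bD(a)=0. \]
Recalling the Jordan module actions $a\bullet D(b)=aD(b)+D(b)a$ and $D(a)\bullet b=D(a)b+bD(a)$, the left-hand side is precisely $a\bullet D(b)+D(a)\bullet b$, so
\[ a\bullet D(b)+D(a)\bullet b=0 \]
whenever $ab=ba=0$. Thus $D$ satisfies (d4).

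With (d4) established, the conclusion is immediate: by Theorem~\ref{d2} (the implication $(iii)\Rightarrow(i)$), $D$ is a generalized Jordan derivation and $aD(1)=D(1)a$ for all $a\in\A$. In the Banach setting, continuity of $D$ passes unchanged through the above argument, so the continuous case follows verbatim with Theorem~\ref{d2} applied in its $\mathbb{M}^{\prime}$ form. I do not expect a genuine obstacle here, since the substantive work—extracting the generalized Jordan derivation identity from a bilinear map vanishing on two-sided zero products—has already been carried out in Theorem~\ref{d2} via Theorem~\ref{ds} (Corollary~\ref{BA}); the only point that requires a little care is bookkeeping the right- versus left-module actions, i.e.\ correctly expanding $a\bullet(\cdot)$ and $(\cdot)\bullet b$ so that the two applications of (d2) sum to (d4).
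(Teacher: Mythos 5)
Your proposal is correct and coincides with the paper's own proof: both apply (d2) to the pairs $(a,b)$ and $(b,a)$ (using the symmetry of the hypothesis $ab=ba=0$), add the two identities to obtain exactly (d4), and then invoke Theorem~\ref{d2} (iii)$\Rightarrow$(i), with the Banach/continuous case handled identically. No gaps; the expansion of the Jordan actions $a\bullet D(b)+D(a)\bullet b$ is exactly as you wrote it.
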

\begin{proof}
Let $a,b\in \A$ with $ab=ba=0$. So
\[ aD(b)+D(a)b=0 \quad and \quad bD(a)+D(b)a=0.\]
Hence $aD(b)+D(a)b+bD(a)+D(b)a=0$ and $D$ satisfies (d4).
Therefore by Theorem~\ref{d2}, $D$ is a generalized Jordan
derivation and $aD(1)=D(1)a$ for all $a\in \A$.
\end{proof}
\begin{rem}
In Theorem~\ref{dd2} it is not necessarily true that any linear
mapping $D:\A\rightarrow \M$ satisfying (d2) is a generalized
derivation. Indeed, if $D$ is a \emph{anti-derivation}, i.e.
$D(ab)=D(b)a+bD(a)$ for all $a,b\in \A$, then $D$ satisfies (d2).
There are simple examples on some algebras and their (special)
bimodules with anti-derivations such that they are not
derivations. An example is given on the algebra $T_{2}$ of $2
\times 2$ upper triangular matrices over $\mathbb{C}$ \cite{Jon}.
Let us recall it. We make $\mathbb{C}$ an $T_{2}$-bimodule by
defining $a\gamma =a_{22}\gamma$ and $\gamma a=\gamma a_{11}$ for
all $\gamma\in \mathbb{C}$, $a\in T_{2}$. A map $D:T_{2}
\rightarrow \mathbb{C}$ defined by $D(a)=a_{12}$ is an
anti-derivation which is not a derivation. Note that if
$\A=T_{2}$ and $\M=\mathbb{C}$, then $\A$, $\M$ and $D$ satisfy
all the requirements in Theorem~\ref{dd2}.
\end{rem}

\subsection*{Acknowledgment}
The author like to express his sincere thanks to the referees for
this paper.

\bibliographystyle{amsplain}
\bibliography{xbib}

\end{document}